\documentclass[12pt,a4paper]{article}
\usepackage{amsfonts}
\usepackage{mathrsfs}
\usepackage{amsmath}
\usepackage{color}
\usepackage{stmaryrd}
\usepackage{amssymb}
\usepackage{bbm}
\usepackage{graphicx}
\usepackage{enumerate}
\usepackage{theorem}
\usepackage{ulem}
\usepackage{authblk}
\makeatletter
\def\tank#1{\protected@xdef\@thanks{\@thanks
        \protect\footnotetext[0]{#1}}}
\def\bigfoot{

    \@footnotetext}
\makeatother

\topmargin=-1cm
\oddsidemargin=-1mm
\evensidemargin=-1mm
\textwidth=165mm
\textheight=250mm

\newcommand{\ea}{\end{array}}
\newtheorem{theorem}{Theorem}[section]
\newtheorem{proposition}{Proposition}[section]

\newtheorem{remark}{Remark}[section]
\newtheorem{assumption}{Assumption}[section]

{\theorembodyfont{\rmfamily}
}

\newenvironment{proof}{Proof.}

\setcounter{equation}{0}

\allowdisplaybreaks

\begin{document}
\title {\Large \bf Wong-Zakai approximations and support theorems for SDEs under Lyapunov conditions} 
\author[1]{Qi Li\thanks{E-mail:vivien777@mail.ustc.edu.cn}}
\author[1]{Jianliang Zhai\thanks{E-mail:zhaijl@ustc.edu.cn}}
\author[2]{Tusheng Zhang\thanks{E-mail:Tusheng.Zhang@manchester.ac.uk}}
\affil[1]{School of Mathematical Sciences, University of Science and Technology of China,
Hefei, Anhui 230026, China.}
\affil[2]{Department of Mathematics, University of Manchester, Oxford Road, Manchester, M13 9PL, UK.}
\renewcommand\Authands{ and }
\date{}
\maketitle
\begin{center}
\begin{minipage}{130mm}
{\bf Abstract.}
\hspace{1em}In this paper, we establish the Stroock-Varadhan type support theorems for stochastic differential equations (SDEs) under Lyapunov conditions, which significantly improve the existing results in the literature where the coefficients of the SDEs are required to be globally Lipschitz and of linear growth. Our conditions are very mild to include many important models, e.g. Threshold Ornstein-Ulenbeck process, Stochastic SIR model, Stochastic Lotka-Volterra systems, Stochastic Duffing-van der Pol oscillator model, which have polynomial the coefficients. To obtain the support theorem, we prove a new Wong-Zakai approximation problem, which is of independent interest.

\vspace{3mm} {\bf Keywords.}
Wong-Zakai approximation; support theorem; local Lipschitz; Lyapunov condition.
\end{minipage}
\end{center}

\section{Introduction}
\setcounter{equation}{0}
 \setcounter{definition}{0}
Let $W$ denote a $d$-dimentional standard Wiener process on a complete filtered probability space $(\Omega,\mathcal{F},(\mathcal{F}_t)_{t\geq0},\mathbb{P})$, where $(\mathcal{F}_t)_{t\geq0}$ is the normal filtration generated by $W$. Denote by $|\cdot|$, $\|\cdot\|$ and $\langle \cdot,\cdot\rangle$ the $\mathbb{R}^m$-norm, $\mathbb{R}^m \otimes \mathbb{R}^d$-norm and inner product of $\mathbb{R}^m$, respectively. Without loss of generality, we work on the finite time interval $[0, 1]$.
We denote by $\mathcal{C}([0,1];\mathbb{R}^m)$ the space of continuous functions $f:[0,1]\rightarrow \mathbb{R}^m$ with the norm $|f|_{\infty}:=\sup\limits_{t\in[0,1]}|f_t|$.
Let $\mathcal{H}$ denote the Cameron-Martin space, \textit{i.e.}, $\mathcal{H}:=\{ h:\dot{h}\in L^2([0,1];\mathbb{R}^d)\}$, where $\dot{h}$ denotes the derivative of $h$.


Consider the stochastic differential equation (SDE):
\begin{equation} 
\label{origin}
X_t = x + \int_0^t b(X_s)ds + \int_0^t \sigma(X_s)dW_s, \quad x\in \mathbb{R}^m , \quad t\in[0,1],
\end{equation}
where  $\sigma:\mathbb{R}^m \rightarrow \mathbb{R}^m \otimes \mathbb{R}^d$ and $b:\mathbb{R}^m \rightarrow \mathbb{R}^m$ are measurable functions. $b$, $\sigma$ and the derivative of $\sigma$, denoted by $\nabla\sigma$, are locally Lipschitz and satisfy some Lyapunov conditions; the precise assumptions on $b$ and $\sigma$ will be introduced in Section 3. In the sequel, we denote the solution of  (\ref{origin}) by $X:=(X_t, t\in[0,1])$.

The aim of this paper is to obtain the Stroock-Varadhan type support theorem for the SDE (\ref{origin}), that is, we characterize the support of $\mathbb{P} \circ X^{-1}$ as the closure of the set $\{S(h);h\in\mathcal{H}\}$ in $\mathcal{C}([0,1];\mathbb{R}^m)$, denoted by $\overline{\mathcal{S}}$, where $S(h)$ is the solution of the following deterministic equation:
\begin{equation}
\label{appro2}
S(h)_t=x+ \int_0^t [b(S(h)_s)-\frac{1}{2}(\nabla\sigma)\sigma(S(h)_s)]ds +\int_0^t \sigma(S(h)_s)\dot{h}_sds, \quad h\in \mathcal{H}.
\end{equation}
The support theorem is important in connection with the investigation of the accessibility, irreducibility and ergodicity of the Markov process generated by the solutions of SDE (\ref{origin}).

\vskip 0.3cm

The characterization of the topological support of the solutions was initially introduced by Stroock and Varadhan \cite{DS} for SDEs with bounded and globally Lipschitz coefficients. Gy\"{o}ngy and Pr\"{o}hle \cite{IT} later extended Stroock and Varadhan's result to SDEs with globally Lipschitz coefficients. Ben Arous, Gradinaru, and Ledoux \cite{GM, GMM} obtained the support theorems for SDEs in a $\alpha$-H\"{o}lder space, requiring  the coefficients $b$ and $\sigma$ to satisfy more strict conditions. Similar results as that in \cite{DS} are found in \cite{SSD}.
We particularly want to mention the reference \cite{AM} in which A. Millet and M. Sanz-Solé proposed a simple, effective approach to obtain the support theorems for SDEs by proving Wong-Zakai type approximations for some associated SDEs.

In the work on support theorems so far, for technical reasons people always assume that the coefficients of the SDEs are globally Lipschitz and of linear growth. These restrictions exclude many important models, like Threshold Ornstein-Ulenbeck process, Stochastic SIR models, Stochastic Lotka-Volterra systems, Stochastic Duffing-van der Pol oscillator models, where the coefficients are of polynomial growth. The purpose of this paper is to
 extend the Stroock-Varadhan support theorem to SDEs under Lyapunov conditions. Our conditions are very mild to allow coefficients of the SDEs to be locally Lipschitz with polynomial growth. In particular, the results can be applied to the interesting models mentioned above.

To obtain our main results, we adopt the same approach as that in \cite{AM}. The crucial step is to prove a Wong-Zakai type approximation for SDEs with local Lipschitz coefficients satisfying certain Lyapunov conditions. The idea is to introduce some localization arguments in order to utilize the existing results in the case where the coefficients of the SDEs are bounded and globally Lipschitz. The Wong-Zakai approximation itself is of independent interest.


Before ending the introduction, let us briefly mention some relevant work on Wong-Zakai approximation of SDEs. Wong-Zakai approximation was introduced by Wong and Zakai in their pioneer work \cite{WZ,WZ2} based on the idea that the noise in SDEs can be approximated by piecewise linear approximations of Brownian motion. Since then, there are a number of papers devoted to this topic. We mention the relevant  work \cite{CK,GX,KA,K,M,AM,NY,RW,SI,DS} and references therein. However, in all these works, the coefficients of the SDEs to required to be globally Lipschitz and bounded.


 We would like to point out that apart from the application of Wong-Zakai approximations to prove the Stroock-Varadhan support theorems, it can be employed to derive some numerical approximation schemes for SDEs, which find many applications in electrical engineering, energy engineering, and related fields, see \cite{SZM,WEG,MN}.


The paper is organized as follows. In Section 2, we introduce the precise assumptions of $B,H,F$ and $G$ and prove the Wong-Zakai approximation results. In Section 3, we prove the support theorem for SDEs. Some new applications are presented in Section 4.

\section{Wong-Zakai approxiamtions}
\setcounter{equation}{0}
 \setcounter{definition}{0}
 In this section we will establish the Wong-Zakai approximations for SDEs with locally Lipschitz coefficients.
 Given a positive integer $n$, for $t \in[0,1],\frac{k}{2^n}\leq t < \frac{k+1}{2^n}$, set
\begin{equation}
\text{\uwave{t}}_n = \frac{k}{2^n}, \quad \underline{t}_n=\frac{k-1}{2^n}\vee 0,
\end {equation}
and define
\begin{equation}
\label{wn}
W_t^n = W_{\uline{t}_n}+ 2^n(t- \text{\uwave{t}}_n )\left[W_{\text{\uwave{t}}_n } -W_{\underline{t}_n}\right].
\end{equation}

Let $B:\mathbb{R}^m \rightarrow \mathbb{R}^m $, $H,G$ and $F: \mathbb{R}^m \rightarrow \mathbb{R}^m \otimes \mathbb{R}^d$ be given measurable mappings.\\
Introduce the following conditions:

\begin{assumption} \label{assump1}
Assume $B, H, F$ are continuous maps on $\mathbb{R}^m$ and $G$ is a $\mathcal{C}^1$-map on $\mathbb{R}^m$ satisfying:

$(i)$ $B$, $H, F$, $G$ and $\nabla G$ are locally Lipschitz.

$(ii)$ There exist a Lyapunov function $V \in \mathcal{C}^2(\mathbb{R}^m;\mathbb{R}_+)$ and $\theta>0,\eta>0$ such that
\begin{equation}
\begin{aligned}\nonumber
&\hspace{-1cm}(a). \lim\limits_{|x|\rightarrow + \infty}V(x)=+\infty,\\
&\hspace{-1cm}(b). J_1(x):=\ \langle B(x),\nabla V(x)\rangle \\
&\hspace{-1cm}\hspace{4.5em} + \frac{\theta}{2}Trace\{(H(x)^\ast\nabla^2V(x)H(x))+(G(x)^\ast\nabla^2V(x)G(x)) +(F(x)^\ast\nabla^2V(x)F(x))\} \\
&\hspace{-1cm}\hspace{4.5em} + \frac{|(H(x)+G(x)+F(x))^\ast\nabla V(x)|^2}{\eta V(x)}\\
&\hspace{-1cm}\hspace{4em}\leq\  C(1+V(x)),\\
&\hspace{-1cm}(c). Trace\{(H(x)^\ast\nabla^2V(x)H(x))+(G(x)^\ast\nabla^2V(x)G(x))+(F(x)^\ast\nabla^2V(x)F(x))\}\geq -M-CV(x).\\
\quad \text{and}\\
&\hspace{-1cm}(d). J_2(x):=\ \langle B(x)+\nabla G(x)[F(x)+\frac{1}{2}G(x)],\nabla V(x)\rangle \\
&\hspace{-1cm}\hspace{4.5em}+ \frac{\theta}{2}Trace\{(H(x)^\ast\nabla^2V(x)H(x))+((F(x)+G(x))^\ast\nabla^2V(x)(F(x)+G(x)))\} \\
&\hspace{-1cm}\hspace{4.5em} + \frac{|(H(x)+G(x)+F(x))^\ast\nabla V(x)|^2}{\eta V(x)}\\
&\hspace{-1cm}\hspace{4em}\leq \ C(1+V(x)),\\
&\hspace{-1cm}(e). Trace\{(H(x)^\ast\nabla^2V(x)H(x)) +((F(x)+G(x))^\ast\nabla^2V(x)(F(x)+G(x)))\}\geq -M-CV(x).\\
\end{aligned}
\end{equation}
Here $\nabla V$, $\nabla^2 V$ and $\nabla G$ stand for the gradient vector, Hessian matrix of the function $V$ and the derivative of the function $G$, respectively; $\cdot^\ast$ denotes the transpose of the matrix; $C$, $M>0$ are some fixed constants.
\end{assumption}
For  $h\in \mathcal{H}$, consider SDEs:
 \begin{align} 
\label{Yn}
Y_t^n &= x + \int_0^tB(Y_s^n)ds + \int_0^t H(Y_S^n)\dot{h}_sds + \int_0^t G(Y_s^n)\dot{W}_s^nds + \int_0^t F(Y_s^n)dW_s,\\
\nonumber
Z_t &= \  x + \int_0^t B(Z_s)ds + \int_0^t H(Z_s)\dot{h}_sds \\
\label{Z}
&\quad\  + \int_0^t \nabla G(Z_s)[F(Z_s)+\frac{1}{2}G(Z_s)]ds + \int_0^t [F(Z_s)+G(Z_s)]dW_s.\\
\nonumber
\end{align}
It is known (see e.g. \cite{JH} Propositions 2.1 and  2.2) that under Assumption \ref{assump1} the SDEs (\ref{Yn}) and (\ref{Z}) admit unique solutions.
The following result is the Wong-Zakai approximation.
\begin{theorem}
Let $Y^n$, $Z$ be solutions of SDEs (\ref{Yn}) and (\ref{Z}), respectively. Suppose Assumption \ref{assump1} is in place. Then, for $\delta>0$
 \begin{equation}
\label{3}
\lim\limits_{n\rightarrow \infty}\mathbb{P}(|Y^n-Z|_{\infty}>\delta)=0.\\
\end{equation}
\end{theorem}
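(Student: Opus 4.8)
The plan is to reduce the problem to the bounded, globally Lipschitz setting already treated by Millet and Sanz-Sol\'e \cite{AM}, via a localization built from the Lyapunov function $V$; the conditions (b)--(e) then serve only to make the localization uniform in $n$. Fix $R>0$ and choose smooth cut-offs yielding coefficients $B_R,H_R,F_R,G_R$ that coincide with $B,H,F,G$ on $\{|x|\le R\}$, are bounded and globally Lipschitz, with $G_R\in\mathcal C^1$ having bounded and globally Lipschitz derivative. Let $Y^{n,R}$ and $Z^R$ solve (\ref{Yn}) and (\ref{Z}) with $(B,H,F,G)$ replaced by $(B_R,H_R,F_R,G_R)$; since $\nabla G_R=\nabla G$ on $\{|x|\le R\}$, the Stratonovich correction in $Z^R$ is the restriction of the one in $Z$. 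Set $\tau_R^n=\inf\{t:|Y^n_t|\ge R\}$ and $\sigma_R=\inf\{t:|Z_t|\ge R\}$. By local pathwise uniqueness, $Y^{n,R}\equiv Y^n$ on $[0,\tau_R^n]$ and $Z^R\equiv Z$ on $[0,\sigma_R]$, so on $\{\tau_R^n>1\}\cap\{\sigma_R>1\}$ one has $|Y^n-Z|_\infty=|Y^{n,R}-Z^R|_\infty$, whence
\[
\mathbb{P}(|Y^n-Z|_\infty>\delta)\ \le\ \mathbb{P}(\tau_R^n\le 1)+\mathbb{P}(\sigma_R\le 1)+\mathbb{P}(|Y^{n,R}-Z^R|_\infty>\delta).
\]

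For the third term, the truncated coefficients satisfy exactly the hypotheses of \cite{AM}, so for each fixed $R$ the Wong--Zakai approximation there gives $\mathbb{P}(|Y^{n,R}-Z^R|_\infty>\delta)\to 0$ as $n\to\infty$. It then remains to make the two escape probabilities small, uniformly in $n$, as $R\to\infty$. Because $V(x)\to+\infty$ as $|x|\to\infty$ by (a), this reduces, via Chebyshev's inequality and the coercivity of $V$, to the uniform moment estimate
\[
\sup_n \mathbb{E}\Big[\sup_{t\le 1} V(Y^n_t)\Big]<\infty,
\]
together with the analogous (and easier) bound for $Z$.

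The technical heart of the proof is this uniform-in-$n$ bound, and it is where the precise form of Assumption \ref{assump1} enters. Applying It\^o's formula to $V(Y^n_t)$, the drift collects $\langle\nabla V,B\rangle$, the $\dot h$-term $\langle\nabla V,H\dot h\rangle$, the Wong--Zakai term $\langle\nabla V,G\dot W^n\rangle$, and the It\^o correction $\tfrac12\mathrm{Tr}(F^\ast\nabla^2V\,F)$ from the genuine integral $\int F(Y^n)\,dW$, plus the martingale $\int\langle\nabla V,F\,dW\rangle$. The main obstacle is the singular drift $\langle\nabla V(Y^n_s),G(Y^n_s)\dot W^n_s\rangle$: by (\ref{wn}) the slope $\dot W^n_s$ is $2^n$ times a completed Brownian increment over a dyadic interval of length $2^{-n}$, hence of order $2^{n/2}$ and \emph{not} bounded as $n\to\infty$. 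The delayed-increment construction in (\ref{wn}) makes $\dot W^n_s$ adapted and piecewise constant, which is what allows this term --- together with the $\dot h$-cross term and the quadratic variation of the martingale part --- to be absorbed by a weighted Young/Cauchy--Schwarz inequality: choosing the weights $\theta,\eta$, the dangerous products are dominated exactly by the trace expressions and by $\tfrac{|(H+G+F)^\ast\nabla V|^2}{\eta V}$, so that the whole drift is dominated by the quantity $J_1(x)$ of condition (b), which (b) in turn bounds by $C(1+V)$; condition (c) furnishes the lower bound on the second-order part needed to control the martingale contribution. A stochastic Gronwall argument combined with the Burkholder--Davis--Gundy inequality then yields the displayed uniform moment bound. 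The bound for $Z$ is obtained the same way from (d)--(e), now with the Stratonovich-corrected drift $\langle B+\nabla G[F+\tfrac12 G],\nabla V\rangle$ and the diffusion $F+G$; here there is no singular term, so the estimate is routine.

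Finally one combines the three pieces by an $\varepsilon/3$ argument: given $\varepsilon>0$, the uniform moment bounds fix $R$ so large that $\sup_n\mathbb{P}(\tau_R^n\le1)<\varepsilon/3$ and $\mathbb{P}(\sigma_R\le1)<\varepsilon/3$; for this $R$, \cite{AM} provides $N$ with $\mathbb{P}(|Y^{n,R}-Z^R|_\infty>\delta)<\varepsilon/3$ for all $n\ge N$. Hence $\mathbb{P}(|Y^n-Z|_\infty>\delta)<\varepsilon$ for $n\ge N$, which is (\ref{3}). All the difficulty is concentrated in the treatment of the singular term $G(Y^n)\dot W^n$ within the Lyapunov estimate; the rest is localization bookkeeping and an appeal to the bounded, globally Lipschitz theory.
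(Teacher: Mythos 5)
Your overall strategy---truncate the coefficients, apply the Millet--Sanz-Sol\'e result to the truncated system, and identify the truncated solutions with the original ones on the event that neither process leaves a large ball---is the same localization idea as in the paper. The difference, and the problem, lies in how you control the event that $Y^n$ escapes the ball. You reduce everything to the uniform-in-$n$ estimate $\sup_n \mathbb{E}[\sup_{t\le 1}V(Y^n_t)]<\infty$, call it the ``technical heart'' of the proof, and then only sketch it. The sketch does not work as stated: you claim the singular drift $\langle \nabla V(Y^n_s),G(Y^n_s)\dot W^n_s\rangle$ can be ``dominated exactly by the trace expressions and by $|(H+G+F)^\ast\nabla V|^2/(\eta V)$'' via a weighted Young/Cauchy--Schwarz inequality. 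Any such pointwise absorption necessarily leaves a factor $|\dot W^n_s|^2$ on one side, and $\mathbb{E}|\dot W^n_s|^2\sim 2^n$, so the resulting bound blows up in $n$. The term is of order one only because of cancellation: it is essentially a discrete integral against delayed Brownian increments whose limit is a Stratonovich-type integral, and extracting a uniform bound requires the full martingale-plus-correction decomposition of the Wong--Zakai calculus, none of which is present. So the key estimate your argument rests on is neither proved nor correctly motivated, and this is a genuine gap. (You have also misread the role of Assumption 2.1: as the paper's Remark 3.1 notes, conditions (b)--(e) are used only to guarantee well-posedness of (\ref{Yn}) and (\ref{Z}) via the cited reference, not to run a moment estimate inside the Wong--Zakai proof.)

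The paper avoids your difficult step entirely, and it is worth seeing how. It localizes on the event $\Omega_R=\{|Z|_\infty\le R\}$, whose probability tends to $1$ simply because $Z$ has a.s.\ finite continuous paths---no moment bound needed. Then, on $\Omega_R$ intersected with the event $\{|Y^{n,R}-Z^R|_\infty\le\delta\}$ (which has probability close to $1$ by the globally Lipschitz case), the triangle inequality forces $|Y^{n,R}|_\infty\le R+1$ since $\delta<1$; because the truncated coefficients agree with the original ones on the ball of radius $R+1$, pathwise uniqueness up to the exit time gives $Y^{n,R}=Y^n$ on that event. In other words, the closeness of $Y^{n,R}$ to the already-localized $Z$ is itself the a priori bound on $Y^n$, so no uniform-in-$n$ Lyapunov estimate is ever required. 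If you want to keep your decomposition with $\mathbb{P}(\tau_R^n\le 1)$ as a separate term, you must actually prove the uniform moment bound, which is a substantial piece of stochastic analysis; the paper's rearrangement of the same three ingredients makes that work unnecessary.
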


\begin{proof}
Without loss of generality, we assume $\delta<1$. For $R\in\mathbb{N}$, set
\begin{equation}  
\mathbb{B}_R=\{f: f\in \mathcal{C}([0,1];\mathbb{R}^m),|f|_{\infty}\leq R\}.\\
\nonumber
\end{equation}
Since $Z\in \mathcal{C}([0,1];\mathbb{R}^m)$ a.e., we have,

\begin{equation} 
\label{4}
1=\mathbb{P}(Z(\omega)\in \bigcup\limits_{R\in\mathbb{N}}\mathbb{B}_R)= \lim\limits_{R\rightarrow \infty}\mathbb{P}(Z(\omega)\in \mathbb{B}_R).\\
\end{equation}
Let $\Omega_R:=\{\omega \in \Omega: Z(\omega)\in \mathbb{B}_R\}$. In view of (\ref{4}),  for any given $\epsilon>0$, we can choose
$R\in \mathbb{N}$ sufficiently large  so that
\begin{equation} 
\label{5}
\mathbb{P}(\Omega_R^c)=1-\mathbb{P}(\Omega_R)<\frac{\epsilon}{2}.\\
\end{equation}
Take a smooth truncation function $\theta_R:\mathbb{R}^m\rightarrow \mathbb{R}$ such that $\theta_R\in [0,1]$ and that
\begin{equation}
\theta_R =\left\{
\begin{aligned}
&1 \quad \text{if }|x|\leq R+1,\\
&0 \quad \text{if }|x|\geq 2(R+1).\\
\end{aligned}
\right.
\end{equation}
Set $B_R(x)=\theta_R(x) B(x), H_R(x)=\theta_R(x) H(x), G_R(x)=\theta_R(x) G(x), F_R(x)=$
$\theta_R(x) F(x)$.

It's easy to see that

$(i)$ $B_R, H_R, G_R, F_R$ and $\nabla G_R$ are globally Lipschitz and bounded.

$(ii)$ $B_R, H_R, G_R, F_R$ and $\nabla G_R$ coincide with $B, H, G, F$ and $\nabla G$ on the closed ball $\overline{B}(0,R+1)\subset \mathbb{R}^m$.

For $R>0$, $h\in \mathcal{H}$, introduce the following SDEs:

\begin{align} 
\label{6}
Z_t^R = & \  x + \int_0^t B_R(Z_s^R)ds + \int_0^t H_R(Z_s^R)\dot{h}_sds \\
\nonumber
&+ \int_0^t \nabla G_R(Z_s^R)[F(Z_s)+\frac{1}{2}G_R(Z_s^R)]ds + \int_0^t [F_R(Z_s^R)+G_R(Z_s^R)]dW_s.\\
Y_t^{n,R} = & \ x + \int_0^tB_R(Y_s^{n,R})ds + \int_0^t H_R(Y_S^{n,R})\dot{h}_sds + \int_0^t G_R(Y_s^{n,R})\dot{W}_s^nds +
\label{7}
 \int_0^t F_R(Y_s^{n,R})dW_s.\\
\nonumber
\end{align}
Then, since $B_R, H_R, G_R, F_R$ and $\nabla G_R$ are globally Lipschitz and bounded, according to Theorem 3.5 in \cite{AM}, we have
\begin{equation}\label{9-1}
\lim_{n\rightarrow\infty}\mathbb{P}(|Z^R-Y^{n,R}|_{\infty}>\delta)=0.
\end{equation}
For $\omega \in \Omega_R$, we have
\begin{equation}
|Z(\omega,\cdot)|_{\infty}\leq R.\\
\nonumber
\end{equation}
This indicates that on $\Omega_R$, $Z$ satisfies the same equation (\ref{6}) as $Z^R$. The uniqueness of solutions to equation (\ref{6}) implies that $Z^R(\omega,t)=Z(\omega,t)$ on $[0,1]$ for $\omega \in \Omega_R$.

For $R>0$, in view of (\ref{9-1}), there exists $N_{\epsilon,R}\in \mathbb{N}$ such that for all $n\geq N_{\epsilon,R}$,
\begin{equation} 
\label{P}
\mathbb{P}(|Z^R-Y^{n,R}|_{\infty}>\delta)<\frac{\epsilon}{2}.\\
\end{equation}
For $n\geq N_{\epsilon,R}$, set
\begin{equation}
\label{Ond}
\Omega_{n,R}:=\{\omega\in\Omega:|Z^R(\omega,\cdot)-Y^{n,R}(\omega,\cdot)|_{\infty}>\delta\}.\\
\vspace{2em}
\end{equation}
We claim that if $n\geq N_{\epsilon,R}$ and if $\omega \in (\Omega_R^c\cup \Omega_{n,R})^c=\Omega_R\cap\Omega_{n,R}^c$, then
\begin{equation}
Y^{n,R}(\omega,t)=Y^n(\omega,t) \quad \text{on } [0,1].
\label{10}
\end{equation}
Indeed, for $\omega \in \Omega_R$, we have $|Z^R(\omega,\cdot)|_{\infty}=|Z(\omega,\cdot)|_{\infty}\leq R$.
If $\omega \in \Omega_{n,R}^c$, we have $|Z^R(\omega,\cdot)-Y^{n,R}(\omega,\cdot)|_{\infty}\leq\delta<1$.
Therefore, on the set  $\Omega_R\cap\Omega_{n,R}^c$,
\begin{equation}
\label{11}  
|Y^{n,R}(\omega,\cdot)|_{\infty}\leq |Z^R(\omega,\cdot)-Y^{n,R}(\omega,\cdot)|_{\infty} + |Z^R(\omega,\cdot)|_{\infty}\leq R+1.
\end{equation}
For $R>0$, define the stopping times:
\begin{align}
&\tau^{R+1}=\text{inf}\{t, |Y^n_t|\geq R+1\},\\
&\tilde{\tau}^{R+1}=\text{inf}\{t, |Y^{n,R}_t|\geq R+1\}.
\end{align}
Set $\tau=\tau^{R+1}\wedge \tilde{\tau}^{R+1}$. Then,  $Y^n_{t\wedge \tau}$ and $Y^{n,R}_{t\wedge \tau}$ are solutions of
the same equation (\ref{7}). Hence by the uniqueness,
\begin{equation}    
\label{8}
Y^n(\omega,t\wedge \tau) = Y^{n,R}(\omega,t\wedge \tau),\quad \text{on } [0,1].
\end{equation}
This further implies that  $\tau=\tau^{R+1}= \tilde{\tau}^{R+1}$.
By (\ref{11}), we see that
\begin{equation}\label{8-1}
\Omega_R\cap\Omega_{n,\delta}^c \subset \{\tilde{\tau}^{R+1}>1\}=\{\tau>1\}=\{\tau^{R+1}>1\}.
\end{equation}
Combing (\ref{8}) with (\ref{8-1}) yields that for $\omega \in (\Omega_R^c\cup \Omega_{n,\delta})^c=\Omega_R\cap\Omega_{n,\delta}^c$,
\begin{equation}\label{8-2}
Y^{n,R}(\omega,t)=Y^n(\omega,t) \quad \text{on } [0,1].
\end{equation}
Combining (\ref{10}) and (\ref{Ond}) together, we deduce that for $n\geq N_{\epsilon,R}$, $\omega \in (\Omega_R^c\cup \Omega_{n,\delta})^c=\Omega_R\cap\Omega_{n,\delta}^c$,.
\begin{equation}
|Y^n(\omega,\cdot)-Z(\omega,\cdot)|_{\infty}=|Y^{n,R}(\omega,\cdot)-Z^R(\omega,\cdot)|_{\infty}\leq \delta.
\end{equation}
Consequently, by (\ref{5}), (\ref{P}),
\begin{equation}
\begin{aligned}
\mathbb{P}(|Y^n-Z|_{\infty}>\delta)&\leq\ \mathbb{P}(\Omega_R^c\cup \Omega_{n,\delta})\\
                                   &\leq\ \mathbb{P}(\Omega_R^c) + \mathbb{P}(\Omega_{n,\delta})\\
                                   &\leq\  \frac{\epsilon}{2} + \frac{\epsilon}{2} =\ \epsilon.
\nonumber
\end{aligned}
\end{equation}
Since $\epsilon$ is arbitrary, we conclude that
$$\lim_{n\rightarrow\infty}\mathbb{P}(|Y^n-Z|_{\infty}>\delta)=0$$
completing the proof.
\end{proof}

\section{Support theorems}
\setcounter{equation}{0}
 \setcounter{definition}{0}
 Now we turn back to the stochastic differential equation (\ref{origin}) and present a support theorem for the solution. Regarding the coefficients $b$, $\sigma$, we introduce the following condition.
\begin{assumption} \label{assump2}
Assume $b$ is a continuous map on $\mathbb{R}^m$ and $\sigma$ is a $\mathcal{C}^1$-map on $\mathbb{R}^m$ satisfying:

$(i)$ Both b, $\sigma$ and $\nabla \sigma$ are locally Lipschitz.

$(ii)$ There exist a Lyapunov function $V \in \mathcal{C}^2(\mathbb{R}^m;\mathbb{R}_+)$ and $\theta>0,\eta>0$ such that
\begin{equation}\nonumber
\lim\limits_{|x|\rightarrow + \infty}V(x)=+\infty,
\end{equation}
\begin{equation}\nonumber
J_1(x):=\langle b(x),\nabla V(x)\rangle + \frac{\theta}{2}Trace(\sigma^\ast(x)\nabla^2V(x)\sigma(x)) + \frac{|\sigma^\ast(x)\nabla V(x)|^2}{\eta V(x)}\leq C(1+V(x)),\\
\end{equation}
\begin{equation}\nonumber
J_2(x):=\langle b(x)-\frac{1}{2}\nabla\sigma\sigma(x),\nabla V(x)\rangle + \frac{\theta}{2}Trace(\sigma^\ast(x)\nabla^2V(x)\sigma(x)) + \frac{|\sigma^\ast(x)\nabla V(x)|^2}{\eta V(x)}\leq C(1+V(x)),
\end{equation}
and\\
\begin{equation}\nonumber
Trace(\sigma^\ast(x)\nabla^2V(x)\sigma(x))\geq -M-CV(x).
\end{equation}
Here $\nabla V$, $\nabla^2 V$ and $\nabla \sigma$ stand for the gradient vector, Hessian matrix of the function $V$ and the derivative of the function $\sigma$, respectively; $\sigma^\ast(x)$ denotes the transpose of $\sigma(x)$; $C$, $M>0$ are some fixed constants.
\end{assumption}
\begin{remark}

\item[1).] The assumption on $J_2(x)$ is proposed to ensure the existence of a unique solution to equation (\ref{appro2}).
\item[2).] The Lyapunov conditions mentioned above are not actually used in our proof. Indeed, once the equation(\ref{origin}) possesses a unique solution and the truncated coefficients are globally Lipschitz and bounded, the Wong-Zakai and support results can be established in a similar way.

\end{remark}
Recall the stochastic differential equation:
\begin{equation} 
\label{3-1}
X_t = x + \int_0^t b(X_s)ds + \int_0^t \sigma(X_s)dWs, \quad t\in[0,1],
\end{equation}
The existence and uniqueness of the solution $X$ under the Assumption \ref{assump2} follows from \cite{JH}.\\

%

The aim of this section is to characterize the support of $\mathbb{P} \circ X^{-1}$ as the closure $\mathcal{S}$ of the set $\{S(h);h\in\mathcal{H}\}$ in $\mathcal{C}([0,1];\mathbb{R}^m)$.

First we recall the following Proposition from \cite{AM}.
\begin{proposition}
Consider a measurable map $V:\Omega\rightarrow E$, where $(E,|\cdot|_E)$ is a seperable Banach space.

$(\mathsf{1})$ let $\zeta_1:\mathcal{H}\rightarrow E$ be a measurable map, and let $H_n:\Omega\rightarrow \mathcal{H}$ be a sequence of random

variable such that for any $\epsilon>0$,
\begin{equation}
\lim\limits_{n\rightarrow \infty}\mathbb{P}(|V(\omega)-\zeta_1(H_n(\omega))|_E>\epsilon)=0.\\
\end{equation}

Then
\begin{equation}
\text{support}(\mathbb{P}\circ V^{-1})\subset \overline{\zeta_1(\mathcal{H})}.
\end{equation}

$(\mathsf{2})$ Let $\zeta_2:\mathcal{H}\rightarrow E$ be a map, and for fixed $h$ let $T_n^h:\Omega \rightarrow \Omega$ be a sequence of measurable

transformations such that $\mathbb{P}\circ(T_n^h)^{-1}\ll \mathbb{P}$, and for any $\epsilon >0$,
\begin{equation}
\limsup\limits_{n\rightarrow \infty} \mathbb{P}(|V(T_n^h(\omega))-\zeta_2(h)|_E<\epsilon)>0.
\end{equation}

Then support$(\mathbb{P}\circ V^{-1})\supset \overline{\zeta_2(\mathcal{H})}$.
\end{proposition}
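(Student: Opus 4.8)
The plan is to argue entirely from the topological characterization of the support of the law $\mu := \mathbb{P}\circ V^{-1}$ on the separable Banach space $E$ (separability guarantees the support is well defined as the smallest closed set of full measure). I will use two equivalent formulations: a point $y$ lies in $\text{support}(\mu)$ if and only if $\mu(B(y,\epsilon))>0$ for every $\epsilon>0$, and a closed set $C$ contains $\text{support}(\mu)$ if and only if $\mu(E\setminus C)=0$. The two inclusions are then handled separately, each reducing to an elementary measure-theoretic estimate.

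For part $(\mathsf{1})$ I would set $C:=\overline{\zeta_1(\mathcal{H})}$ and introduce the distance function $d(x,C):=\inf_{y\in C}|x-y|_E$, which is continuous and hence measurable. It suffices to show $\mathbb{P}(d(V,C)>0)=0$; since the events $\{d(V,C)>1/k\}$ increase to $\{d(V,C)>0\}$ as $k\to\infty$, continuity of measure from below reduces this to proving $\mathbb{P}(d(V,C)>1/k)=0$ for each fixed $k$. The key observation is that for every $\omega$ and every $n$ the point $\zeta_1(H_n(\omega))$ lies in $C$, so $d(V(\omega),C)\le |V(\omega)-\zeta_1(H_n(\omega))|_E$; consequently $\{d(V,C)>1/k\}\subset\{|V-\zeta_1(H_n)|_E>1/k\}$ for \emph{every} $n$. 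Passing to probabilities and letting $n\to\infty$, the hypothesis applied with $\epsilon=1/k$ forces the left-hand probability to vanish. This part is routine.

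For part $(\mathsf{2})$, since the support is closed it is enough to show $\zeta_2(h)\in\text{support}(\mu)$ for each fixed $h$, i.e. $\mathbb{P}(|V-\zeta_2(h)|_E<\epsilon)>0$ for every $\epsilon>0$. Writing $A:=\{|V-\zeta_2(h)|_E<\epsilon\}$, one has $\{|V\circ T_n^h-\zeta_2(h)|_E<\epsilon\}=(T_n^h)^{-1}(A)$, so the hypothesis reads $\limsup_n [\mathbb{P}\circ(T_n^h)^{-1}](A)>0$. In particular $[\mathbb{P}\circ(T_n^h)^{-1}](A)>0$ for some (indeed infinitely many) $n$. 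The absolute continuity $\mathbb{P}\circ(T_n^h)^{-1}\ll\mathbb{P}$ now enters decisively: if $\mathbb{P}(A)=0$ held, then absolute continuity would give $[\mathbb{P}\circ(T_n^h)^{-1}](A)=0$ for every $n$, contradicting the previous line. Hence $\mathbb{P}(A)>0$, as required, and taking $h$ arbitrary followed by closure yields $\overline{\zeta_2(\mathcal{H})}\subset\text{support}(\mu)$.

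The step I expect to require the most care is the use of absolute continuity in part $(\mathsf{2})$: the argument hinges on correctly rewriting the probabilistic $\limsup$ condition as a statement about the pushforward measures $\mathbb{P}\circ(T_n^h)^{-1}$, and then invoking the contrapositive of $\ll\mathbb{P}$ to transport positivity of mass from the transformed system back to the untransformed probability space. Everything else—continuity of the distance function, the monotone-convergence reduction, and the event inclusion in part $(\mathsf{1})$—is standard and should present no difficulty.
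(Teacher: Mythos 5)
Your proof is correct. Note that the paper itself gives no proof of this proposition --- it is merely recalled from the Millet--Sanz-Sol\'e reference \cite{AM} --- and your two arguments (the distance-function/monotone-limit reduction showing $\mu(\overline{\zeta_1(\mathcal H)})=1$ for part $(\mathsf{1})$, and the pushforward-plus-absolute-continuity contradiction for part $(\mathsf{2})$) are exactly the standard ones underlying that cited result, so there is nothing to reconcile.
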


The following result is the support theorem.

\begin{theorem} 
\label{support}
Suppose Assumption $\ref{assump2}$ holds. Let $X$ and $S(h)$ be the solutions to equations $\left( \ref{3-1}\right)$ and $\left(\ref{appro2}\right)$. Then supp$(\mathbb{P}\circ X^{-1})=\overline{\mathcal{S}}$, where $\overline{\mathcal{S}}$ denotes the closure of $\mathcal{S}=\{S(h);h\in\mathcal{H}\}$ in the space $\mathcal{C}([0,1];\mathbb{R}^m)$ and supp$(\mathbb{P}\circ X^{-1})$ denotes the support of the distribution $\mathbb{P}\circ X^{-1}$.
\end{theorem}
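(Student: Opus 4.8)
The plan is to follow the Millet--Sanz-Solé scheme recalled in the Proposition above, establishing the two inclusions $\mathrm{supp}(\mathbb P\circ X^{-1})\subseteq\overline{\mathcal S}$ and $\mathrm{supp}(\mathbb P\circ X^{-1})\supseteq\overline{\mathcal S}$ separately. Each inclusion will be reduced to a single convergence-in-probability statement that is an instance of the Wong--Zakai approximation (Theorem 2.1), applied to two carefully chosen specializations of the coefficients $(B,H,G,F)$. The guiding observation is that the It\^o--Stratonovich correction $\tfrac12(\nabla\sigma)\sigma$ appearing in the skeleton equation (\ref{appro2}) is exactly the drift correction $\nabla G[F+\tfrac12 G]$ produced in the limit equation (\ref{Z}); matching these is what lets (\ref{appro2}) arise as a limit of regularized equations.

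For the inclusion $\mathrm{supp}(\mathbb P\circ X^{-1})\subseteq\overline{\mathcal S}$ I would take $B=b-\tfrac12(\nabla\sigma)\sigma$, $H=0$, $G=\sigma$, $F=0$. With this choice the equation (\ref{Yn}) reduces precisely to (\ref{appro2}) driven by the polygonal path $h=W^n$, i.e. $Y^n=S(W^n)$, while (\ref{Z}) collapses to the original SDE (\ref{3-1}), i.e. $Z=X$. One checks directly that Assumption \ref{assump1} for $(B,H,G,F)$ follows from Assumption \ref{assump2} with the same $V,\theta,\eta$: condition (b) becomes the bound on $J_2$ of Assumption \ref{assump2}, condition (d) becomes the bound on its $J_1$, and (c),(e) become the trace bound. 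Theorem 2.1 then gives $|S(W^n)-X|_\infty\to 0$ in probability, and applying part $(\mathsf 1)$ of the Proposition with $V=X$, $\zeta_1=S(\cdot)$ and the $\mathcal H$-valued random variables $H_n(\omega)=W^n(\omega)$ yields the inclusion, once one notes that $S(\cdot):\mathcal H\to\mathcal C([0,1];\mathbb R^m)$ is measurable.

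For the reverse inclusion I fix $h\in\mathcal H$ and introduce the transformations $T_n^h:\Omega\to\Omega$ determined by $W\circ T_n^h=W+h-W^n$. Since $h-W^n$ is adapted and $\dot h-\dot W^n\in L^2([0,1])$ a.s., Girsanov's theorem gives $\mathbb P\circ(T_n^h)^{-1}\ll\mathbb P$; because $\dot W^n$ is only piecewise constant and adapted, the exponential is handled interval-by-interval along the dyadic mesh (a stepwise Girsanov/Novikov argument) rather than globally. Writing $d(W+h-W^n)=dW+(\dot h-\dot W^n)\,ds$, the transformed solution $X\circ T_n^h$ solves (\ref{Yn}) with $B=b$, $H=\sigma$, $G=-\sigma$, $F=\sigma$; for this same choice $F+G=0$ and $\nabla G[F+\tfrac12 G]=-\tfrac12(\nabla\sigma)\sigma$, so (\ref{Z}) is exactly the skeleton equation (\ref{appro2}), i.e. $Z=S(h)$. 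Theorem 2.1 gives $X\circ T_n^h=Y^n\to Z=S(h)$ in probability, whence $\mathbb P(|X\circ T_n^h-S(h)|_\infty<\epsilon)\to 1$ and part $(\mathsf 2)$ of the Proposition (with $\zeta_2=S(\cdot)$) yields the inclusion. Combining the two gives $\mathrm{supp}(\mathbb P\circ X^{-1})=\overline{\mathcal S}$.

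The main obstacle lies in this second step. First, one must verify Assumption \ref{assump1} for the choice $G=-\sigma$, $F=\sigma$: here $H,G,F$ are all $\pm\sigma$, so the trace term in condition (b) becomes $\tfrac{3\theta}{2}\mathrm{Tr}(\sigma^\ast\nabla^2V\sigma)$. I would compensate by running Assumption \ref{assump1} with $\theta$ replaced by $\theta/3$, which makes (b) coincide with the bound on $J_1$ of Assumption \ref{assump2}; the price is that condition (d) then carries the coefficient $\tfrac{\theta}{6}$ in place of $\tfrac{\theta}{2}$, and the resulting shortfall $-\tfrac{\theta}{3}\mathrm{Tr}(\sigma^\ast\nabla^2V\sigma)$ is absorbed using precisely the trace lower bound $\mathrm{Tr}(\sigma^\ast\nabla^2V\sigma)\ge -M-CV$ of Assumption \ref{assump2} (at the cost of enlarging $C,M$). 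Second, one must justify that solving (\ref{3-1}) commutes with the shift $T_n^h$, so that $X\circ T_n^h$ genuinely solves (\ref{Yn}); this is where the absolute continuity $\mathbb P\circ(T_n^h)^{-1}\ll\mathbb P$ is used, to transfer the almost sure identities between the two equations.
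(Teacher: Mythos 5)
Your proposal is correct and follows essentially the same route as the paper: both inclusions are obtained from Proposition 3.1 with $H_n(\omega)=\omega^n$, $T_n^h(\omega)=\omega-\omega^n+h$, and the Wong--Zakai Theorem 2.1 applied with exactly the same two specializations $B=b-\tfrac12(\nabla\sigma)\sigma$, $H=F=0$, $G=\sigma$ and $B=b$, $H=F=\sigma$, $G=-\sigma$. Your explicit verification that Assumption 3.1 implies Assumption 2.1 for these choices (including the $\theta/3$ rescaling and the use of the trace lower bound) and your remark on the stepwise Girsanov argument supply details the paper leaves implicit, but they do not change the approach.
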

\begin{proof}
We will apply Proposition 3.1. To this end, we take $E=\mathcal{C}([0,1];\mathbb{R}^m)$, $V=X$, $\zeta_1=\zeta_2=S(\cdot)$, $H_n(\omega)= \omega^n$ and $T_n^h(\omega)=\omega-\omega^n+h$, where $\omega^n$ is defined as (\ref{wn}). Then Girsanov's theorem implies that $\mathbb{P}\circ(T_n^h)^{-1}\ll \mathbb{P}$. Thus, according to Proposition 3.1, the equality supp$(\mathbb{P}\circ X^{-1})=\overline{\mathcal{S}}$ will follow from the following approximation results, for every $\delta>0$:
\begin{align} 
\label{1}
&\lim\limits_{n\rightarrow \infty}\mathbb{P}(|X(\omega)-S(\omega^n)|_{\infty}>\delta)=0,\\
&\lim\limits_{n\rightarrow \infty} \mathbb{P}(|X(\omega-\omega^n+h)-S(h)|_{\infty}>\delta)>0.
\label{2}
\end{align}
Where $S(\omega^n)$ is the solution of equation
\begin{equation}\label{001}
S(\omega^n)_t= x +\int_0^t [b(S(\omega^n)_s)-\frac{1}{2}(\nabla\sigma)\sigma(S(\omega^n)_s)]ds +\int_0^t \sigma(S(\omega^n)_s)\dot{W}^n_sds.\\
\end{equation}
On the other hand, approximations of stochastic integrals by Riemann sums imply that $X^n(\omega):=X(\omega-\omega^n+h)$ is the solution of the following stochastic differential equation:
\begin{equation}\label{002}
X^n_t=x +\int_0^t b(X_s^n)ds + \int_0^t \sigma(X_s^n)\dot{h}_sds - \int_0^t \sigma(X_s^n)\dot{W}_s^nds + \int_0^t \sigma(X_s^n)dW_s.
\end{equation}
By a close examination, we find that both $X^n$ and $ S(\omega^n)$ are particular cases of solutions of the stochastic differential equation (\ref{Yn}) in Section 2.
Actually, setting $B=b-\frac{1}{2}(\nabla\sigma)\sigma,H=0, G=\sigma$ and $F=0$ in the equation (\ref{3}) we obtain (\ref{1}); while setting $B=b,H=\sigma, G=-\sigma$ and $F=\sigma$ gives (\ref{2}).
Therefore,  (\ref{1}) and (\ref{2}) are the particular cases of the convergence stated in Theorem 2.1:
\begin{equation}
\lim\limits_{n\rightarrow \infty}\mathbb{P}(|Y^n-Z|_{\infty}>\delta)=0.
\end{equation}
The proof is complete.
\end{proof}

\section{Examples}
\setcounter{equation}{0}
 \setcounter{definition}{0}
The assumption $\ref{assump2}$ is very mild to include many interesting models. In this section we provide some examples to which the main results apply.

\textbf{Example 4.1} Consider the following one-dimensional SDE:
\begin{equation}
dx_t = -x^3_tdt + x^2_t dB_t.
\end{equation}

In this case, $b(x)=-x^3$, $\sigma(x)=x^2$ and $\nabla\sigma(x)=2x$.
If we take $\theta=1,\eta=4$ and $V(x)=x^2$, then
\begin{equation}\nonumber
\begin{aligned}\nonumber
&\lim\limits_{|x|\rightarrow + \infty}V(x)=+\infty,\\
\nonumber
&J(x):=\langle b(x),\nabla V(x)\rangle + \frac{\theta}{2}Trace(\sigma^\ast(x)\nabla^2V(x)\sigma(x)) + \frac{|\sigma^\ast(x)\nabla V(x)|^2}{\eta V(x)}=0 \leq 1+V(x),\\
\nonumber
&\langle-\frac{1}{2}\nabla\sigma\sigma,\nabla V(x)\rangle \leq0,\\
\nonumber
&Trace(\sigma^\ast(x)\nabla^2V(x)\sigma(x))=2x^4 \geq -V(x).\\
\nonumber
\end{aligned}
\end{equation}
This shows that Assumption \ref{assump2} holds. We can now apply Theorem \ref{support} to get the following result.
\begin{proposition}
Let $\mathcal{H}$ denote the Cameron-Martin space. For $h\in \mathcal{H}$, let $S(h)$ be the solution of the differential equation:
\begin{equation}
S(h)_t=x_0+ \int_0^t -2S(h)_s^3 ds +\int_0^t S(h)_s^2 \dot{h}_sds.\\
\end{equation}\nonumber
 Then supp$(\mathbb{P}\circ x^{-1})=\overline{\mathcal{S}}$, where $\overline{\mathcal{S}}$ denotes the closure of $\mathcal{S}=\{S(h);h\in\mathcal{H}\}$ in $\mathcal{C}([0,1];\mathbb{R})$.
\end{proposition}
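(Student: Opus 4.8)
The plan is to verify that the coefficients $b(x) = -x^3$ and $\sigma(x) = x^2$ satisfy Assumption \ref{assump2} and then to invoke the support theorem, Theorem \ref{support}, directly. Since $b$, $\sigma$ and $\nabla\sigma(x) = 2x$ are polynomials, they are smooth and hence locally Lipschitz, so part $(i)$ holds automatically. The substance of the argument lies in exhibiting a Lyapunov function together with parameters $\theta,\eta>0$ for which the inequalities in part $(ii)$ hold.

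I would take $V(x) = x^2$, which is $\mathcal{C}^2$, nonnegative and satisfies $\lim_{|x|\to\infty} V(x) = +\infty$, and set $\theta = 1$, $\eta = 4$. With $\nabla V(x) = 2x$ and $\nabla^2 V(x) = 2$, the three contributions to $J_1(x)$ become $\langle b(x), \nabla V(x)\rangle = -2x^4$, the trace term $\tfrac{\theta}{2}\,x^2\cdot 2\cdot x^2 = x^4$, and the gradient-of-diffusion term $\tfrac{|2x^3|^2}{\eta\, x^2} = x^4$. These cancel exactly, giving $J_1(x) = 0 \le 1 + V(x)$. For $J_2(x)$ one replaces $b$ by $b - \tfrac12(\nabla\sigma)\sigma$; since $(\nabla\sigma)\sigma(x) = 2x^3$, the extra drift contributes $\langle -x^3, 2x\rangle = -2x^4 \le 0$, so $J_2(x) = J_1(x) - 2x^4 = -2x^4 \le 1 + V(x)$. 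Finally, the trace lower bound holds because $\mathrm{Trace}(\sigma^\ast\nabla^2 V\,\sigma) = 2x^4 \ge 0 \ge -M - CV(x)$ for any $C,M>0$. Thus Assumption \ref{assump2} is satisfied with, say, $C = M = 1$.

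Next I would confirm that the deterministic control equation stated in the proposition is precisely the instance of the skeleton equation (\ref{appro2}) for these coefficients: substituting $b(x) - \tfrac12(\nabla\sigma)\sigma(x) = -x^3 - x^3 = -2x^3$ and $\sigma(x) = x^2$ into (\ref{appro2}) yields exactly $S(h)_t = x_0 + \int_0^t -2S(h)_s^3\,ds + \int_0^t S(h)_s^2\,\dot{h}_s\,ds$. With Assumption \ref{assump2} verified and the control equation matched, Theorem \ref{support} applies and gives $\mathrm{supp}(\mathbb{P}\circ x^{-1}) = \overline{\mathcal{S}}$, which is the assertion of the proposition.

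The computation contains no genuine obstacle; the only delicate point is the choice of the pair $(\theta,\eta)$. The strongly dissipative drift produces a term $-2x^4$ that must absorb the two positive quartic contributions coming from the trace and the gradient term. The specific choice $\theta = 1$, $\eta = 4$ is what makes these three quartic terms cancel, so that $J_1$ is bounded (in fact identically zero) rather than growing like $x^4$; a careless choice of parameters would leave an uncontrolled positive $x^4$ and the Lyapunov inequality would fail. Once the right $(\theta,\eta)$ is found, everything else is routine verification.
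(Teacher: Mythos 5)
Your proposal is correct and follows essentially the same route as the paper: verify Assumption 3.1 with $V(x)=x^2$, $\theta=1$, $\eta=4$ (so that the drift term $-2x^4$ exactly cancels the two quartic contributions $x^4+x^4$, giving $J_1\equiv 0$), note that the extra term in $J_2$ is nonpositive and that the trace $2x^4\ge 0$, and then apply Theorem 3.2. Your write-up is merely more explicit than the paper's in spelling out the $J_2$ bound and in matching the skeleton equation (3.2) to the stated control ODE.
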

\vskip 0.3cm
The next three examples  are taken from  \cite{JH}.\\
\vskip 0.3cm
\textbf{Example 4.2}(Stochastic Duffing-van der Pol oscillator model) The Duffing-van der Pol oscillator equation unifies both the Duffing equation and the van der Pol equation describing a self-oscillating triode/diode circuit. The stochastic version of the model is given by the following SDE (see \cite{SMA}).
\begin{equation}
\begin{aligned}
\ddot{X}^{x,1}_t &= \alpha_2\dot{X}^{x,1}_t - \alpha_1 X^{x,1}_t-\alpha_3(X^{x,1}_t)^2\dot{X}^{x,1}_t - (X^{x,1}_t)^3 + g(X^{x,1}_t)\dot{W}_t,\\
X^{x,1}_0        &= x_1, \quad \dot{X}^{x,1}_0=x_2,
\nonumber
\end{aligned}
\end{equation}
where $\alpha_1,\alpha_2,\alpha_3\in (0,\infty).$ Here we assume $|g(x)|^2\leq\eta_0+\eta_1|x|^4,\eta_0,\eta_1>0$ and both $g(x)$ and $g'(x)$ are locally Lipschitz.
Setting $X^{x,2}_t:=\dot{X}^{x,1}_t$, then the above equation is equivalent to following system of SDEs:
\begin{equation}
\begin{aligned}
dX^{x,1}_t &= X^{x,2}_tdt,\\
dX^{x,2}_t &= [\alpha_2 X^{x,2}_t - \alpha_1 X^{x,1}_t-\alpha_3(X^{x,1}_t)^2 X^{x,2}_t - (X^{x,1}_t)^3] +g(X^{x,1}_t)dW_t,\\
X^{x,1}_0  &=x_1, \quad X^{x,2}=x_2.
\nonumber
\end{aligned}
\end{equation}
For $x=(x_1,x_2)\in\mathbb{R}^2$, set $b(x)=(x_2,\alpha_2x_2-\alpha_1x_1-\alpha_3(x_1)^2x_2-(x_1)^3)^T$ and $\sigma(x)=(0,g(x_1))^T$.
Define $V(x)=\frac{(x_1)^4}{2}+\alpha_1(x_1)^2 +(x_2)^2, \theta=\eta=1$. Then
\begin{equation}\nonumber
\lim\limits_{|x|\rightarrow + \infty}V(x)=+\infty.\\
\end{equation}
\begin{equation}
\begin{aligned}\nonumber
J(x):&=\langle b(x),\nabla V(x)\rangle + \frac{1}{2}Trace(\sigma^\ast(x)\nabla^2V(x)\sigma(x)) + \frac{|\sigma^\ast(x)\nabla V(x)|^2}{V(x)}\\
&=x_2(2(x_1)^3 +2\alpha_1x_1) + 2x_2(\alpha_2 x_2-\alpha_1x_1-\alpha_3(x_1)^2x_2-(x_1)^3)\\
& \quad +|g(x_1)|^2 +\frac{4(x_2)^2|g(x_1)|^2}{\frac{(x_1)^4}{2}+\alpha_1(x_1)^2 +(x_2)^2}\\
&=2\alpha_2(x_2)^2-2\alpha_3(x_1)^2(x_2)^2 + |g(x_1)|^2 + \frac{4(x_2)^2|g(x_1)|^2}{\frac{(x_1)^4}{2}+\alpha_1(x_1)^2 +(x_2)^2}\\
&\leq\eta_0 + 2\alpha_2(x_2)^2 + \eta_1(x_1)^4 + \frac{4\eta_0(x_2)^2 + \eta_1(x_1)^4(x_2)^2}{\frac{(x_1)^4}{2}+\alpha_1(x_1)^2 +(x_2)^2}\\
&\leq 5\eta_0 + (8\eta_1 +2\alpha_2)(x_2)^2+\eta_1(x_1)^4\\
&\leq (5\eta_0+10\eta_1+2\alpha_2)(1+V(x)),\\
\langle-\frac{1}{2}\nabla&\sigma\sigma,\nabla V(x)\rangle =0,
\end{aligned}
\end{equation}
and
\begin{equation}\nonumber
Trace(\sigma^\ast(x)\nabla^2V(x)\sigma(x))=2|g(x_1)|^2 \geq -V(x).\\
\end{equation}
Hence Assumption \ref{assump2} holds and we have the following result.
\begin{proposition}
Let $\mathcal{H}$ denote the Cameron-Martin space. For $h\in \mathcal{H}$, let $S(h)=(S(h)_1,S(h)_2)^T$ be the solution of the differential equation:
\begin{equation}\nonumber
\begin{aligned}
S(h)_{1,t}&=x_1+ \int_0^t S(h)_{2,s} ds.\\
S(h)_{1,t}&=x_2+ \int_0^t \alpha_2 S(h)_{2,s}- \alpha_1 S(h)_{1,s}-\alpha_3(s(h)_{1,s})^2S(h)_{2,s}- (S(h)_{1,s})^3 ds +\int_0^t g(S(h)_{1,s})\dot{h}_sds.\\
\end{aligned}
\end{equation}\nonumber
 Then supp$(\mathbb{P}\circ X^{-1})=\overline{\mathcal{S}}$, where $\overline{\mathcal{S}}$ denotes the closure of $\mathcal{S}=\{S(h);h\in\mathcal{H}\}$ in $\mathcal{C}([0,1];\mathbb{R}^2)$.
\end{proposition}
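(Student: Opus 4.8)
The plan is to exhibit the Duffing-van der Pol system as a particular case of the SDE (\ref{origin}) with $m=2$, $d=1$, and then to conclude by a direct application of Theorem \ref{support}. First I would record the coefficients $b(x)=(x_2,\;\alpha_2 x_2-\alpha_1 x_1-\alpha_3 x_1^2 x_2-x_1^3)^T$ and $\sigma(x)=(0,g(x_1))^T$; since $g$ and $g'$ are assumed locally Lipschitz and all the remaining entries of $b$ and $\nabla\sigma$ are polynomial, part $(i)$ of Assumption \ref{assump2} (local Lipschitz continuity of $b$, $\sigma$, $\nabla\sigma$) holds automatically, and the existence and uniqueness of $X$ follows from the cited result.

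The substance of the argument is the verification of part $(ii)$ with the candidate Lyapunov function $V(x)=\tfrac12 x_1^4+\alpha_1 x_1^2+x_2^2$ and $\theta=\eta=1$. Coercivity $V(x)\to+\infty$ is clear. Computing $\langle b,\nabla V\rangle$, the decisive feature is that the cross terms $\pm 2x_1^3 x_2$ and $\pm 2\alpha_1 x_1 x_2$ cancel, leaving $2\alpha_2 x_2^2-2\alpha_3 x_1^2 x_2^2$. Adding $\tfrac12\mathrm{Trace}(\sigma^\ast\nabla^2 V\sigma)=|g(x_1)|^2$ and the ratio $\tfrac{4x_2^2|g(x_1)|^2}{V(x)}$, and then invoking the growth hypothesis $|g(x_1)|^2\le\eta_0+\eta_1 x_1^4$, bounds $J_1$ by a constant multiple of $1+V$. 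The lower trace bound $\mathrm{Trace}(\sigma^\ast\nabla^2 V\sigma)=2|g(x_1)|^2\ge -V(x)$ is immediate, since the trace is nonnegative.

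The only genuinely new point, beyond reading $S(h)$ off (\ref{appro2}), is to check that the Stratonovich drift correction $\tfrac12(\nabla\sigma)\sigma$ vanishes. Because $\sigma$ depends only on $x_1$ while its unique nonzero component occupies the second slot, $\nabla\sigma$ has its only nonzero entry in the $(2,1)$ position, whence $(\nabla\sigma)\sigma=0$. Thus $\langle-\tfrac12(\nabla\sigma)\sigma,\nabla V\rangle=0$, so $J_2=J_1$, the correction term disappears from (\ref{appro2}), and the reduced controlled system is exactly the one stated for $S(h)$.

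With Assumption \ref{assump2} verified and the limiting equation identified, the equality supp$(\mathbb{P}\circ X^{-1})=\overline{\mathcal{S}}$ is a direct consequence of Theorem \ref{support}. I do not anticipate any real obstacle: the work is the Lyapunov estimate, whose only delicate features are the cubic cancellation in $J_1$ and the elementary observation that this degenerate, $x_1$-dependent diffusion carries no Stratonovich correction.
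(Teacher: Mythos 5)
Your proposal is correct and follows essentially the same route as the paper: both verify Assumption \ref{assump2} with the Lyapunov function $V(x)=\tfrac12 x_1^4+\alpha_1 x_1^2+x_2^2$ and $\theta=\eta=1$, exploit the cancellation of the cross terms in $\langle b,\nabla V\rangle$, use the growth bound on $|g|^2$, note that the Stratonovich correction $(\nabla\sigma)\sigma$ vanishes, and then invoke Theorem \ref{support}. Your explicit justification of why $(\nabla\sigma)\sigma=0$ is a small, welcome elaboration of a step the paper leaves implicit.
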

\vskip 0.3cm 
\textbf{Example 4.3}(Stochastic Lotka-Volterra(LV) systems) The Lotka-Volterra systems play an important role in game theory, population dynamics etc.(see \cite{JK}). Here we consider the three-dimensional Stratonovich stochastic copetitive LV system:
\begin{equation}\nonumber
\begin{aligned}
dy_1 &= y_1(r-a_{11}y_1-a_{12}y_2-a_{13}y_3)dt + \gamma y_1\circ dB_t.\\
dy_2 &= y_2(r-a_{21}y_1-a_{22}y_2-a_{23}y_3)dt + \gamma y_2\circ dB_t.\\
dy_3 &= y_3(r-a_{31}y_1-a_{32}y_2-a_{33}y_3)dt + \gamma y_3\circ dB_t.\\
\end{aligned}
\end{equation}
where $\sigma,r>0,a_{ij}>0,i,j=1,2,3;\gamma$ is parameter and initial data$(y_1(0),y_2(0),y_3(0))\in(0,+\infty)^3$. According to \cite{LZ} Thm 3.2, we know $y(t)=(y_1(t),y_2(t),y_3(t))\in(0,+\infty)^3$ for all $t>0$. And the above system is equivalent to the It$\hat{o}$ stochastic Lotka-Volterra system:
\begin{equation}\nonumber
dy_i = y_i(r + \frac{\gamma^2}{2} -\sum_{j=1}^3a_{ij}y_j)dt + \gamma y_idB_t,\quad i=1,2,3.\\
\end{equation}
Set $b(y)=(y_1(r + \frac{\gamma^2}{2} -\sum_{j=1}^3a_{1j}y_j),y_2(r + \frac{\gamma^2}{2} -\sum_{j=1}^3a_{2j}y_j),y_3(r + \frac{\gamma^2}{2} -\sum_{j=1}^3a_{3j}y_j))^T,\sigma(y)=(\gamma y_1,\gamma y_2,\gamma y_3)^T$.
Let $V(y)=|y|^2$.  Then
\begin{equation}\nonumber
\begin{aligned}
&\lim\limits_{|y|\rightarrow + \infty}V(y)=+\infty.\\
&J(y):=\langle b(y),\nabla V(y)\rangle + \frac{\theta}{2}Trace(\sigma^\ast(y)\nabla^2V(y)\sigma(y)) + \frac{|\sigma^\ast(y)\nabla V(y)|^2}{\eta V(y)}\\
&\hspace{2.2em} =\sum_{i=1}^3 2y_i^2(r + \frac{\gamma^2}{2} -\sum_{j=1}^3a_{ij}y_j)+\theta\gamma^2V(y) +\frac{4\gamma^2}{\eta}V(y)\\
&\hspace{2.2em} \leq 2V(y)(r + \frac{\gamma^2}{2}) + \theta\gamma^2V(y) +\frac{4\gamma^2}{\eta}V(y)\\
&\hspace{2.2em}\leq C(1+V(y)),\\
&\langle-\frac{1}{2}\nabla\sigma\sigma,\nabla V(x)\rangle \leq0,\\
&Trace(\sigma^\ast(y)\nabla^2V(y)\sigma(y))=2\gamma^2V(y)\geq 0.\\
\end{aligned}
\end{equation}
Hence, Assumption \ref{assump2} holds. Apply Theorem \ref{support} to get
\begin{proposition}
Let $\mathcal{H}$ denote the Cameron-Martin space. For $h\in \mathcal{H}$, let $S(h)=(s(h)_1,S(h)_2),$\\
$S(h)_3)^T$ be the solution of the differential equation:
\begin{equation}\nonumber
\begin{aligned}
S(h)_{1,t}&=y_1(0)+ \int_0^t S(h)_{1,s}(r  -\sum_{j=1}^3a_{1j}S(h)_{j,s})ds + \int_0^t\gamma S(h)_{1,s}\dot{h}_sds.\\
S(h)_{2,t}&=y_2(0)+ \int_0^t S(h)_{2,s}(r  -\sum_{j=1}^3a_{2j}S(h)_{j,s})ds + \int_0^t\gamma S(h)_{2,s}\dot{h}_sds.\\
S(h)_{3,t}&=y_2(0)+ \int_0^t S(h)_{3,s}(r  -\sum_{j=1}^3a_{3j}S(h)_{j,s})ds + \int_0^t\gamma S(h)_{3,s}\dot{h}_sds.\\
\end{aligned}
\end{equation}
 Then supp$(\mathbb{P}\circ X^{-1})=\overline{\mathcal{S}}$, where $\overline{\mathcal{S}}$ denotes the closure of $\mathcal{S}=\{S(h);h\in\mathcal{H}\}$ in $\mathcal{C}([0,1];\mathbb{R}^3)$.
\end{proposition}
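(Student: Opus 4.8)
The plan is to obtain the statement as a direct application of the support theorem, Theorem \ref{support}, once the stochastic Lotka--Volterra system has been put in It\^o form and Assumption \ref{assump2} has been checked. First I would record the equivalence of the Stratonovich system with the It\^o system $dy_i = y_i(r+\tfrac{\gamma^2}{2}-\sum_{j}a_{ij}y_j)\,dt + \gamma y_i\,dB_t$, so that $b(y)=(y_i(r+\tfrac{\gamma^2}{2}-\sum_j a_{ij}y_j))_{i}$ and $\sigma(y)=(\gamma y_1,\gamma y_2,\gamma y_3)^T$; these are polynomial, hence $b$, $\sigma$ and $\nabla\sigma=\gamma I$ are all locally Lipschitz. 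By \cite{LZ} the solution started at $(y_1(0),y_2(0),y_3(0))\in(0,\infty)^3$ is global and stays in the open octant $(0,\infty)^3$, which is the effective state space throughout.

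Next I would verify Assumption \ref{assump2} with $V(y)=|y|^2$ and arbitrary $\theta,\eta>0$, which is the computation displayed just before the statement. The two inputs that make it work are that on $(0,\infty)^3$ the competition terms $-\sum_j a_{ij}y_j$ are nonpositive, so the cubic part of $\langle b,\nabla V\rangle$ is discarded in the upper bound to leave $J_1(y)\le \big(2(r+\tfrac{\gamma^2}{2})+(\theta+\tfrac{4}{\eta})\gamma^2\big)V(y)\le C(1+V(y))$; and that the extra drift in $J_2$ is favorable, since $\langle-\tfrac12(\nabla\sigma)\sigma,\nabla V\rangle=-\gamma^2 V\le 0$ forces $J_2\le J_1\le C(1+V)$. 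The latter bound also secures, via the remark following Assumption \ref{assump2}, the unique solvability of the skeleton equation (\ref{appro2}); the trace lower bound is immediate from $\operatorname{Trace}(\sigma^\ast\nabla^2V\sigma)=2\gamma^2 V\ge 0$.

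The single computation that renders the conclusion explicit is the It\^o--Stratonovich correction in (\ref{appro2}). Since $\nabla\sigma=\gamma I$ and $\sigma(y)=\gamma y$, one has $\tfrac12(\nabla\sigma)\sigma(y)=(\tfrac{\gamma^2}{2}y_1,\tfrac{\gamma^2}{2}y_2,\tfrac{\gamma^2}{2}y_3)^T$, so the drift $b-\tfrac12(\nabla\sigma)\sigma$ of $S(h)$ reduces to $y_i(r-\sum_j a_{ij}y_j)$ --- the It\^o correction $\tfrac{\gamma^2}{2}y_i$ cancels and the original Stratonovich drift reappears --- while the control term is $\sigma(S(h))\dot{h}=\gamma S(h)\dot{h}$. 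This is precisely the system defining $S(h)$ in the statement, and Theorem \ref{support} then yields $\operatorname{supp}(\mathbb{P}\circ X^{-1})=\overline{\mathcal{S}}$.

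The point I would treat most carefully --- and the only genuine obstacle --- is that Assumption \ref{assump2} as verified holds on the octant $(0,\infty)^3$ rather than on all of $\mathbb{R}^3$, since for a state $y$ with sign-changing coordinates the cubic term in $J_1$ need not be dominated by $C(1+V)$. As the remark following Assumption \ref{assump2} indicates, what the argument actually needs is a unique non-exploding solution of the It\^o system, furnished here by \cite{LZ} together with the confinement to $(0,\infty)^3$, plus locally Lipschitz coefficients whose truncations are globally Lipschitz and bounded. Because the solution never leaves the octant, the truncated coefficients entering the localization in the proof of Theorem 2.1 agree with $b,\sigma$ on the region the process visits, so the support characterization carries over without change.
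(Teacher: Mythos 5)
Your proposal follows the same route as the paper: pass to the It\^o form, verify Assumption \ref{assump2} with $V(y)=|y|^2$, identify the skeleton via the cancellation $b-\tfrac{1}{2}(\nabla\sigma)\sigma = \bigl(y_i(r-\sum_j a_{ij}y_j)\bigr)_i$, and invoke Theorem \ref{support}. Your closing caveat is well taken rather than a defect: the paper's displayed estimate $\sum_i 2y_i^2(r+\tfrac{\gamma^2}{2}-\sum_j a_{ij}y_j)\le 2V(y)(r+\tfrac{\gamma^2}{2})$ indeed holds only on the invariant octant $(0,\infty)^3$ and not on all of $\mathbb{R}^3$, and your repair via the positivity result of \cite{LZ} together with the remark following Assumption \ref{assump2} supplies exactly what the paper leaves implicit.
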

\textbf{Example 4.4}(Stochastic SIR model) The SIR model from epidemiology for the total number of susceptible, infected and and revovered individuals has been introduced by Anderson and May \cite{RM2}.
Here we consider the following stochatic SIR model:
\begin{equation}\nonumber
\begin{aligned}
dX_t^{x,1} &= (-\alpha X_t^{x,1}X_t^{x,2}-\kappa X_t^{x,1} +\kappa)dt -\beta X_t^{x,1}X_t^{x,2}dW_t,\\
dX_t^{x,2} &= (\alpha X_t^{x,1}X_t^{x,2}-(\gamma +\kappa) X_t^{x,2})dt +\beta X_t^{x,1}X_t^{x,2}dW_t,\\
dX_t^{x,3} &= (\gamma X_t^{x,2}-\kappa X_t^{x,3})dt,\\
X_0^{x,1} &= x_1,\quad X_0^{x,2} = x_2,\quad X_0^{x,3} = x_3,\\
\end{aligned}
\end{equation}
where $\alpha,\beta,\gamma,\kappa\in (0,\infty)$ and $x=(x_1,x_2,x_3)\in[0,\infty)^3$.

For $x=(x_1,x_2,x_3)\in[0,\infty)^3$, set $b(x)=(-\alpha x_1x_2-\kappa x_1 +\kappa, \alpha x_1x_2-(\gamma + \kappa)x_2,\gamma x_2-\kappa x_3)^T$ and $\sigma(x)=(-\beta x_1x_2,\beta x_1x_2,0)^T.$ It's easy to see that $b$ and $\sigma$ are local Lipschitz continuous and satisfy the Assumption \ref{assump2} with $V(x)=(x_1 + x_2 - 1)^2$ for any positive $\theta$ and $\eta$. Then applying Thm \ref{support}, we have:
\begin{proposition}
Let $\mathcal{H}$ denote the Cameron-Martin space. For $h\in \mathcal{H}$, let $S(h)=(s(h)_1,S(h)_2,$\\
$S(h)_3)^T$ be the solution of the differential equation:
\begin{equation}\nonumber
\begin{aligned}
S(h)_{1,t}&=x_1+ \int_0^t (-\alpha S(h)_{1,s}S(h)_{2,s}-\frac{\beta^2}{2}(S(h)_{1,s}S(h)_{2,s})(S(h)_{2,s}-S(h)_{1,s})-\kappa S(h)_{1,s} +\kappa)ds\\
 &\quad - \int_0^t \beta S(h)_{1,s}S(h)_{2,s}\dot{h}_sds,\\
S(h)_{2,t}&=x_2+ \int_0^t (\alpha S(h)_{1,s}S(h)_{2,s}+\frac{\beta^2}{2}(S(h)_{1,s}S(h)_{2,s})(S(h)_{2,s}-S(h)_{1,s})-(\gamma +\kappa) S(h)_{2,s})ds\\
& \quad+ \int_0^t\beta S(h)_{1,s}S(h)_{2,s}\dot{h}_sds,\\
S(h)_{3,t}&=x_3+ \int_0^t (\gamma S(h)_{2,s}-\kappa S(h)_{3,s})ds.\\
\end{aligned}
\end{equation}
 Then supp$(\mathbb{P}\circ X^{-1})=\overline{\mathcal{S}}$, where $\overline{\mathcal{S}}$ denotes the closure of $\mathcal{S}=\{S(h);h\in\mathcal{H}\}$ in $\mathcal{C}([0,1];\mathbb{R}^3)$.
\end{proposition}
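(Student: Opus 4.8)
The plan is to check that the coefficients of the stochastic SIR system satisfy Assumption \ref{assump2} with the proposed Lyapunov function $V(x)=(x_1+x_2-1)^2$, after which Theorem \ref{support} applies verbatim and delivers $\mathrm{supp}(\mathbb{P}\circ X^{-1})=\overline{\mathcal{S}}$; the explicit skeleton equations for $S(h)$ are then read off by substituting $b$, $\sigma$ and $\nabla\sigma\,\sigma$ into \eqref{appro2}. Since $b(x)=(-\alpha x_1x_2-\kappa x_1+\kappa,\ \alpha x_1x_2-(\gamma+\kappa)x_2,\ \gamma x_2-\kappa x_3)^T$ and $\sigma(x)=(-\beta x_1x_2,\beta x_1x_2,0)^T$ are polynomial maps, they and $\nabla\sigma$ are smooth, hence locally Lipschitz, so part (i) of the assumption is immediate.

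The decisive simplification I would exploit is a pair of orthogonality relations. Computing $\nabla V=2(x_1+x_2-1)(1,1,0)^T$ and observing that $\sigma$ is a scalar multiple of $(-1,1,0)^T$, which is orthogonal to $(1,1,0)^T$, I obtain $\sigma^\ast\nabla V=0$; and since the only nonzero entries of $\nabla^2V$ form the constant all-twos block in the $(x_1,x_2)$ variables, also $\nabla^2V\,\sigma=0$. Consequently the two $\sigma$-dependent terms in $J_1$ and $J_2$, namely $\tfrac{\theta}{2}\mathrm{Trace}(\sigma^\ast\nabla^2V\sigma)$ and $\tfrac{|\sigma^\ast\nabla V|^2}{\eta V}$, vanish identically for every $\theta,\eta>0$, and the trace lower bound holds trivially because $\mathrm{Trace}(\sigma^\ast\nabla^2V\sigma)=0\ge -M-CV$. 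A short computation gives $\nabla\sigma\,\sigma=\beta^2x_1x_2(x_2-x_1)(1,-1,0)^T$, which is again orthogonal to $\nabla V$, so $\langle\nabla\sigma\,\sigma,\nabla V\rangle=0$ and therefore $J_1(x)=J_2(x)=\langle b(x),\nabla V(x)\rangle$.

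It then remains only to bound $\langle b,\nabla V\rangle\le C(1+V)$, and I expect this to be the single genuinely non-cosmetic step. Writing $u=x_1+x_2-1$ and summing the first two drift components gives $b_1+b_2=-\kappa u-\gamma x_2$, whence $\langle b,\nabla V\rangle=2u(b_1+b_2)=-2\kappa u^2-2\gamma ux_2$. The term $-2\gamma ux_2$ cannot be controlled by $V$ by pure algebra, so the positivity of the state space $[0,\infty)^3$ (which is invariant for both the SDE and the controlled equation, and also underlies the well-posedness of $X$ and $S(h)$) must be used: on $\{u\ge0\}$ one has $-2\gamma ux_2\le0$ since $x_2\ge0$, while on $\{u<0\}$ the inequality $u\ge x_2-1$ forces $x_2<1$, so $-2\gamma ux_2=2\gamma|u|x_2\le2\gamma|u|\le\gamma(1+u^2)=\gamma(1+V)$. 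In both cases $\langle b,\nabla V\rangle\le\gamma(1+V)$, so $J_1=J_2\le C(1+V)$ and Assumption \ref{assump2} holds for any $\theta,\eta>0$. Theorem \ref{support} now yields the claimed support identity, and feeding the computed corrector $-\tfrac12\nabla\sigma\,\sigma$, whose first component is $-\tfrac{\beta^2}{2}x_1x_2(x_2-x_1)$ and whose second component is its negative, into \eqref{appro2} produces exactly the stated system, the third equation being free of any diffusion correction. The main obstacle is thus the drift estimate, which rests on the invariance of the positive orthant rather than on an unconditional polynomial bound.
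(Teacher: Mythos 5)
Your overall route is exactly the paper's: check Assumption \ref{assump2} for the SIR coefficients with $V(x)=(x_1+x_2-1)^2$ and then invoke Theorem \ref{support}. The difference is that the paper dismisses the verification with ``it's easy to see,'' whereas you actually carry it out, and your computations are correct: $\sigma^\ast\nabla V=0$, $\nabla^2V\,\sigma=0$, $\nabla\sigma\,\sigma=\beta^2x_1x_2(x_2-x_1)(1,-1,0)^T$ is orthogonal to $\nabla V$, so $J_1=J_2=\langle b,\nabla V\rangle=-2\kappa u^2-2\gamma u x_2$ with $u=x_1+x_2-1$, and the two-case bound ($u\ge0$ versus $u<0$) on the positive orthant is right. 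You have also put your finger on a point the paper glosses over: the estimate $\langle b,\nabla V\rangle\le C(1+V)$ is false on all of $\mathbb{R}^3$ (take $x_1+x_2=0$ and $x_2\to\infty$), so the Lyapunov inequality can only be met after restricting to the invariant orthant $[0,\infty)^3$, which is not how Assumption \ref{assump2} is literally phrased. A related defect that neither you nor the paper addresses is that condition $\lim_{|x|\to\infty}V(x)=+\infty$ fails for this $V$ (it vanishes on the whole plane $x_1+x_2=1$ and is independent of $x_3$), and the quotient $|\sigma^\ast\nabla V|^2/(\eta V)$ is $0/0$ there. These are shortcomings of the paper's choice of $V$ rather than of your argument --- the paper's own Remark 3.1 concedes that the Lyapunov conditions only serve to guarantee well-posedness --- but a fully rigorous write-up should either exhibit a coercive Lyapunov function on the invariant region (e.g.\ by adding a term controlling $x_3$, whose equation is linear and driven by $x_2$) or establish non-explosion directly and appeal to that remark.
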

\textbf{Example 4.5}(Threshold Ornstein-Ulenbeck processes) 
\begin{equation}
dX_t=\sum_{i=1}^{n}(\beta_i -\alpha_i X_t)I_{\{\theta_{i-1}\leq X_t <\theta_i\}}dt + \sigma d B_t,\\
\label{exp3.4}
\end{equation}
where $ \beta_1,\cdots,\beta_n,\alpha_1,\cdots,\alpha_n,-\infty=\theta_0<\theta_1<\cdots<\theta_{n-1}<\theta_n=\infty$ are constants. Set $b(x)=\sum_{i=1}^{n}(\beta_i -\alpha_ix)I_{\{\theta_{i-1}\leq x < \theta_i\}}$ and $\sigma(x)=\sigma$. It's clear that (\ref{exp3.4}) satisfy the Assumption \ref{assump2} with $V(x)=|x|^2.$ Then applying Thm \ref{support}, we have:
\begin{proposition}
Let $\mathcal{H}$ denote the Cameron-Martin space. For $h\in \mathcal{H}$, let $S(h)$ be the solution of the differential equation:
\begin{equation}\nonumber
\begin{aligned}
S(h)_t&=X_0+ \int_0^t \sum_{i=1}^{n}(\beta_i -\alpha_i S(h)_s)I_{\{\theta_{i-1}\leq S(h)_s < \theta_i\}}ds + \int_0^t\sigma\dot{h}_sds.\\
\end{aligned}
\end{equation}\nonumber
 Then supp$(\mathbb{P}\circ X^{-1})=\overline{\mathcal{S}}$, where $\overline{\mathcal{S}}$ denotes the closure of $\mathcal{S}=\{S(h);h\in\mathcal{H}\}$ in $\mathcal{C}([0,1];\mathbb{R})$.\\
\end{proposition}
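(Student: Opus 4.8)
The plan is to derive the proposition as a direct corollary of the support theorem, Theorem \ref{support}. The whole task then reduces to two steps: checking that $b(x)=\sum_{i=1}^{n}(\beta_i-\alpha_i x)I_{\{\theta_{i-1}\le x<\theta_i\}}$ and $\sigma(x)\equiv\sigma$ satisfy Assumption \ref{assump2} with $V(x)=x^2$, and then specialising the skeleton equation (\ref{appro2}) to this case.

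First I would record $\nabla V(x)=2x$, $\nabla^2V(x)=2$ and, since $\sigma$ is constant, $\nabla\sigma\equiv0$. The last identity forces $J_1(x)=J_2(x)$, and both collapse to
\[
J_1(x)=2x\,b(x)+\theta\sigma^2+\frac{4\sigma^2}{\eta}
\]
(one may use $V(x)=1+x^2$ to avoid the apparent singularity of the final term at the origin). On each band $\{\theta_{i-1}\le x<\theta_i\}$ we have $2x\,b(x)=2\beta_i x-2\alpha_i x^2$, and Young's inequality $2\beta_i x\le\beta_i^2+x^2$ bounds this by $\max_i\beta_i^2+C_0x^2$ with $C_0$ depending only on the $\alpha_i$. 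Taking $C$ large enough to absorb $\theta\sigma^2+4\sigma^2/\eta$, $\max_i\beta_i^2$ and $C_0$ yields $J_1(x)=J_2(x)\le C(1+V(x))$ for any fixed $\theta,\eta>0$. The trace condition is immediate, since $\mathrm{Trace}(\sigma^\ast\nabla^2V\sigma)=2\sigma^2\ge0\ge -M-CV(x)$. Hence Assumption \ref{assump2} holds.

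Because $\nabla\sigma\equiv0$, the Stratonovich correction $-\tfrac12(\nabla\sigma)\sigma$ in (\ref{appro2}) vanishes and $\sigma(S(h)_s)\dot h_s=\sigma\dot h_s$, so $S(h)$ solves exactly the ordinary differential equation displayed in the statement. Theorem \ref{support} then gives $\mathrm{supp}(\mathbb{P}\circ X^{-1})=\overline{\mathcal{S}}$, which is the assertion.

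The genuine obstacle is that the threshold drift is discontinuous: at each $\theta_i$ its one-sided limits differ by $(\beta_{i+1}-\beta_i)-(\alpha_{i+1}-\alpha_i)\theta_i$, generically nonzero, so $b$ is neither continuous nor locally Lipschitz and Assumption \ref{assump2}$(i)$ fails as literally stated. With it also fails the globally-Lipschitz property of the truncated coefficients that Theorems 2.1 and \ref{support} import from \cite{AM}. This is precisely the situation anticipated in Remark 3.1(2): what is truly needed is that (\ref{exp3.4}) possesses a unique strong solution and that the skeleton equation is well posed. Both are available here because the noise is additive and nondegenerate---unique strong solvability for bounded measurable drift follows from Veretennikov/Zvonkin-type results (after the usual localisation, as $b$ has only linear growth), and the piecewise-affine skeleton ODE is uniquely solvable by a monotonicity argument across consecutive bands. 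The step I would scrutinise most carefully is whether the Wong-Zakai approximation of Theorem 2.1 persists when the truncated drift is merely bounded, measurable and one-sided continuous; this cannot be quoted verbatim from the globally-Lipschitz result in \cite{AM} and is the real content concealed by the words ``it's clear.''
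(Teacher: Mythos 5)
Your route is the same as the paper's: the paper's entire ``proof'' of this proposition consists of asserting that the coefficients satisfy Assumption \ref{assump2} with $V(x)=|x|^2$ and then invoking Theorem \ref{support}. Your Lyapunov computation (with $\nabla\sigma\equiv 0$, so $J_1=J_2$, and Young's inequality on each band) is correct and is more detail than the paper supplies. The specialisation of the skeleton equation (\ref{appro2}) is also right.

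The substantive part of your proposal is the observation that $b$ is discontinuous at each interior threshold $\theta_i$, so Assumption \ref{assump2}$(i)$ fails as stated; this is a genuine defect in the example, and the paper does not address it --- it simply writes ``it's clear.'' Your proposed repair, however, does not close the gap. Strong existence and pathwise uniqueness for (\ref{exp3.4}) via Zvonkin/Veretennikov, and well-posedness of the piecewise-affine skeleton ODE, give you the two objects $X$ and $S(h)$, but the proof of Theorem \ref{support} runs entirely through Theorem 2.1, whose localisation step hinges on the truncated coefficients $B_R,H_R,G_R,F_R$ being globally Lipschitz so that Theorem 3.5 of \cite{AM} applies to give (\ref{9-1}). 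Truncating a discontinuous $b$ still leaves a discontinuous $B_R$, so (\ref{9-1}) cannot be quoted, and Remark 3.1(2) explicitly presupposes ``the truncated coefficients are globally Lipschitz and bounded,'' which is exactly what fails here. You correctly flag this as the step you would scrutinise, but flagging it is not proving it: a Wong--Zakai theorem for bounded measurable drift with additive nondegenerate noise is a separate (nontrivial, though plausible in the additive-noise case where the correction term vanishes and $G=\pm\sigma$ is constant) result that neither you nor the paper establishes. As it stands, both your proposal and the paper's proof of this proposition are incomplete at the same point; yours has the merit of saying so.
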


\section{Acknowledgement}
\setcounter{equation}{0}
 \setcounter{definition}{0}

This work is partially supported by National Key R$\&$D program of China (No. 2022 YFA1006001)), National Natural Science Foundation of China (Nos. 12131019, 12371151, 11721101). Jianliang Zhai's research is also supported by the School Start-up Fund(USTC) KY0010000036 and the Fundamental Research Funds for the Central Universities(No. WK3470000016).


\begin{thebibliography}{2}
\bibitem{GM} G.B. Arous and M. Gradinaru, \textit{Normes hölderiennes et support des diffusions.} C.R. Acad. Sc. Paris, Sér. I, 1993, 316(3): 283-286.

\bibitem{GMM}  G.B. Arous, M. Gradinaru and M. Ledoux, \textit{Hölder norms and the support theorem for diffusions.} Ann. Inst. H. Poincar\'{e}, 1994, 30(3): 415-436.

\bibitem{SSD} S. Aida, S. Kusuoka, and D. Stroock, \textit{On the support of Wiener functionals. Asymptotic problems in probability theory: Wiener functionals and asymptotics.} Pitman Res. Notes Math. Ser. 284, Longman Sci. \& Tech., 1993: 3-34.

\bibitem{RM2} R.M. Anderson and R.M. May, \textit{Population biology of infectious diseases: Part I.} Nature, 1979, 280(5721): 361-367.
\bibitem{SZM} Z. Bekiryazici, S. Sengul and M. Merdan, \textit{Wong-Zakai method for stochastic differential equations in engineering.} Therm. Sci., 2021, 25(Spec. issue 1): 131-142.
\bibitem{LZ} L.F. Chen, Z. Dong, J.F. Jiang, L. Niu and J.L. Zhai, \textit{Decomposition formula and stationary measures for stochastic Lotka-Volterra system with applications to turbulent convection.} J. Math. Pures Appl., 2019, 125: 43-93.

\bibitem{SMA} S.G. Cox, M. Hutzenthaler and A. Jentzen, \textit{Local Lipschitz continuity in the initial value and strong completeness for nonlinear stochastic differential equations}. arXiv:1309.5595, 2013.

\bibitem{CK} R. Cont and A. Kalinin, \textit{On the support of solutions to stochastic differential equations with path-dependent coefficients.} Stoch. Proc. Appl., 2020, 130(5): 2639-2674.

\bibitem{WEG} W. Cao, Z. Zhang and G.E. Karniadakis, \textit{Numerical Methods for Stochastic Delay Differential Equations Via the Wong--Zakai Approximation.} SIAM J. Sci. Comput., 2015, 37(1): A295-A318.

\bibitem{IT} I. Gyöngy and T. Pröhle, \textit{On the approximation of stochastic differential equation and on Stroock-Varadhan's support theorem.} Computers Math.
Appl., 1990, 19(1): 65-70.

\bibitem{GX} J. Gong and J. Xu, \textit{Wong-Zakai approximations and support theorems for stochastic McKean-Vlasov equations.} Forum Math., 2022, 34(6): 1411-1432.

\bibitem{JK} J. Hofbauer and K. Sigmund, \textit{Evolutionary Games and Population Dynamics.} Cambridge Univ. Press, Cambridge, 1998.

\bibitem{KA} A. Kalinin, \textit{Support characterization for regular path-dependent stochastic Volterra integral equations.} Electron. J. Probab., 2021, 26: 1-29.

\bibitem{MN} M. Kamrani and N. Jamshidi, \textit{Implicit Milstein method for stochastic differential equations via the Wong-Zakai approximation.} Numer. Algorithms, 2018, 79: 357-374.

\bibitem{K} F. Konecny, \textit{On Wong-Zakai approximation of stochastic differential equations.} J. Multivariate Anal., 1983, 13(4): 605-611.



\bibitem{M} E.J. McShane, \textit{Stochastic differential equations and models of random processes.} Berkeley Symp. on Math. Statist. and Prob.,6.2 (Univ. California, Berkeley, Calif., 1970/1971). 1972, 3: 263-294.

\bibitem{AM} A. Millet and M. Sanz-Solé, \textit{A simple proof of the support theorem for diffusion processes} Sém. probab. XXVIII. Berlin, Heidelberg: Springer Berlin Heidelberg, 2006: 36-48.

\bibitem{NY} S. Nakao and Y. Yamato, \textit{Approximation theorem on stochastic differential equations.} Proc. Int. Symp. on SDEs (Res. Inst. Math. Sci., Kyoto Univ., Kyoto, 1976). 1976: 283-296.

\bibitem{RW} J. Ren and J. Wu, \textit{On approximate continuity and the support of reflected stochastic differential equations.} Ann. Probab., 2016, 44(3): 2064¨C2116.


\bibitem{SI} T. Simon, \textit{Support theorem for jump processes.} Stoch. Proc. Appl., 2000, 89(1): 1-30.

\bibitem{DS} D.W. Stroock and S.R.S. Varadhan, \textit{On the support of diffusion processes with applications to the strong maximum principle.} Berkeley Symp. on Math. Statist. and Prob.,6.2(Univ. California, Berkeley, Calif., 1970/1971). 1972, 3: 333-359.

\bibitem{JH} J. Wang, H. Yang, J.L. Zhai and T.S. Zhang, \textit{Large deviation principles for SDEs under locally weak monotonicity conditions.} Bernoulli, 2024, 30(1): 332-345.

\bibitem{WZ} E. Wong and M. Zakai, \textit{On the relation between ordinary and stochastic differential equations.}  Int. J. Eng. Sci., 1965, 3(2): 213-229.

\bibitem{WZ2} E. Wong and M. Zakai, \textit{On the convergence of ordinary integrals to stochastic integrals.} Ann. Math. Stat., 1965, 36(5): 1560-1564.








\end{thebibliography}
\end{document}